\DeclareMathOperator{\Ker}{Ker}
\newcommand{\ac}{\text{ac}}
\newcommand{\ess}{\text{ess}}
\renewcommand\Im{\hbox{{\rm Im}}\,}
\newcommand{\abs}[1]{\lvert#1\rvert}
\newcommand{\Abs}[1]{\left\lvert#1\right\rvert}
\newcommand{\Hank}{H}
\newcommand{\bbR}{{\mathbb R}}
\newcommand{\bbC}{{\mathbb C}}
\newcommand{\bbZ}{{\mathbb Z}}
\newcommand{\calM}{\mathcal{M}}
\numberwithin{equation}{section}
\renewcommand{\[}{\begin{equation}}
\renewcommand{\]}{\end{equation}}
\theoremstyle{plain}
\newtheorem{theorem}{\bf Theorem}[section]
\newtheorem*{theorem*}{Theorem 1.1$'$}
\newtheorem{lemma}[theorem]{\bf Lemma}
\newtheorem{proposition}[theorem]{\bf Proposition}
\theoremstyle{definition}
\newtheorem*{definition*}{\bf Definition}
\theoremstyle{remark}
\newtheorem*{remark*}{\bf Remark}
\newcommand{\eps}{\varepsilon}
\newcommand{\1}{\mathbbm{1}}
\newcommand{\Sch}{{\mathbf{S}}}
\newcommand{\f}{\varphi}
\begin{document}

\title[Weighted Hankel operators]{Weighted integral Hankel operators with continuous spectrum}

\author{Emilio Fedele}
\address{Department of Mathematics, King's College London, Strand, London, WC2R~2LS, U.K.}
\email{emilio.fedele@kcl.ac.uk}

\author{Alexander Pushnitski}
\address{Department of Mathematics, King's College London, Strand, London, WC2R~2LS, U.K.}
\email{alexander.pushnitski@kcl.ac.uk}

\subjclass[2010]{47B35}

\keywords{weighted Hankel operators, absolutely continuous spectrum, Carleman operator}

\begin{abstract}
Using the Kato-Rosenblum theorem, we describe the absolutely continuous spectrum 
of a class of weighted integral Hankel operators in $L^2(\bbR_+)$. 
These self-adjoint operators generalise the explicitly diagonalisable operator with the integral 
kernel $s^\alpha t^\alpha(s+t)^{-1-2\alpha}$, where $\alpha>-1/2$.
Our analysis can be considered as an extension of J.~Howland's 1992 paper which 
dealt with the unweighted case, corresponding to $\alpha=0$. 
\end{abstract}

\date{\today}

\maketitle

%%%%%%%%%%%%%%%%%%%%%%%%%%%%%%%%%%%%%%%%%%%%%%%%%
%%%%%%%%%%%%%%%%%%%%%%%%%%%%%%%%%%%%%%%%%%%%%%%%%
\section{Introduction}\label{sec.a}
%%%%%%%%%%%%%%%%%%%%%%%%%%%%%%%%%%%%%%%%%%%%%%%%%
%%%%%%%%%%%%%%%%%%%%%%%%%%%%%%%%%%%%%%%%%%%%%%%%%

%\subsection{Preliminaries}
The aim of this paper is to consider some variants (perturbations) of the following simple integral operator:
\begin{gather}
A_\alpha: L^2(\bbR_+)\to L^2(\bbR_+),\quad \alpha>-1/2,
\notag
\\
(A_\alpha f)(t)=\int_0^\infty \frac{t^\alpha s^\alpha}{(s+t)^{1+2\alpha}}f(s)ds,
\quad f\in L^2(\bbR_+).
\label{a1}
\end{gather}
Since the integral kernel of $A_\alpha$ is homogeneous of degree $-1$, 
this operator can be explicitly diagonalised by the Mellin transform
$$
\calM f(\xi)=\frac1{\sqrt{2\pi}}\int_0^\infty t^{-\frac12+i\xi}f(t)dt, \quad \xi\in\bbR,
$$
which is a unitary map from $L^2(\bbR_+,dt)$ to $L^2(\bbR,d\xi)$. 
Mellin transform effects
a unitary transformation of $A_\alpha$ into the operator of multiplication by the function 
(here $\Gamma$ is the standard Gamma function)
$$
\bbR\ni\xi\mapsto
\frac{\abs{\Gamma(\frac12+\alpha+i\xi)}^2}{\Gamma(1+2\alpha)}
$$
in $L^2(\bbR,d\xi)$. 
The spectrum of $A_\alpha$ is given by the range of this function. 
Observe that this  function is even in $\xi$ and monotone increasing on $(-\infty,0)$; 
we denote its maximum, attained at $\xi=0$, by 
\[
\pi_\alpha=\frac{\Gamma(\frac12+\alpha)^2}{\Gamma(1+2\alpha)}.
\label{a1a}
\]
With this notation, we can summarise the above discussion by 
%%%%%%%%%%%%%%%%%%%%
\begin{proposition}\label{prp.a1}
%%%%%%%%%%%%%%%%%%%%
For $\alpha>-1/2$, the operator $A_\alpha$ of \eqref{a1} in $L^2(\bbR_+)$ is 
bounded and selfadjoint, and has a purely absolutely continuous (a.c.) spectrum of multiplicity two given by 
$$
\sigma_\ac(A_\alpha)=[0,\pi_\alpha].
$$
\end{proposition}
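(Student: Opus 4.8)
The plan is to make rigorous the diagonalisation already sketched above. First I would write the kernel of $A_\alpha$ in \eqref{a1} as $t^\alpha s^\alpha(s+t)^{-1-2\alpha}=s^{-1}k(t/s)$, where $k(x)=x^\alpha(1+x)^{-1-2\alpha}$, so that $(A_\alpha f)(t)=\int_0^\infty k(t/s)f(s)\,\frac{ds}{s}$ is a multiplicative convolution. The assumption $\alpha>-1/2$ is exactly what makes $x\mapsto k(x)x^{-1/2}$ integrable on $\bbR_+$, with $\int_0^\infty k(x)x^{-1/2}\,dx=B(\tfrac12+\alpha,\tfrac12+\alpha)<\infty$. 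A Schur test with the weight $h(t)=t^{-1/2}$ (which reproduces itself up to this constant) then shows that $A_\alpha$ is bounded, with $\norm{A_\alpha}\le B(\tfrac12+\alpha,\tfrac12+\alpha)$, and the symmetry of the kernel gives selfadjointness.

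Next I would conjugate by $\calM$. The substitution $t=e^u$ identifies $\calM$ with the Fourier transform on $L^2(\bbR,du)$ and converts the multiplicative convolution above into an ordinary convolution with the kernel $\kappa(u)=e^{u/2}k(e^u)$, which lies in $L^1(\bbR)\cap L^2(\bbR)$ precisely because $\alpha>-1/2$. Hence $\calM A_\alpha\calM^{-1}$ is the operator of multiplication by $m(\xi)=\int_0^\infty k(x)x^{-1/2+i\xi}\,dx$ on $L^2(\bbR,d\xi)$. This integral is a Beta integral: $m(\xi)=\int_0^\infty x^{(\frac12+\alpha+i\xi)-1}(1+x)^{-(1+2\alpha)}\,dx=B(\tfrac12+\alpha+i\xi,\tfrac12+\alpha-i\xi)=\abs{\Gamma(\tfrac12+\alpha+i\xi)}^2/\Gamma(1+2\alpha)$, the conditions for convergence and for the Beta evaluation being exactly $\Re(\tfrac12+\alpha)>0$. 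In particular $m$ is real-valued, consistent with selfadjointness.

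It then remains to read off the spectral data from $m$. Using the Weierstrass product for $1/\Gamma$ one obtains $m(\xi)=\pi_\alpha\prod_{n\ge0}\bigl(1+\xi^2/(n+\tfrac12+\alpha)^2\bigr)^{-1}$, with $\pi_\alpha$ as in \eqref{a1a}; since $n+\tfrac12+\alpha>0$ for all $n\ge0$, this exhibits $m$ as an even, continuous function, strictly decreasing on $[0,\infty)$ from $m(0)=\pi_\alpha$ to $\lim_{\xi\to+\infty}m(\xi)=0$ (the decay is also immediate from Stirling's formula). Thus $m$ restricts to a $C^\infty$ diffeomorphism of each of $(-\infty,0)$ and $(0,\infty)$ onto $(0,\pi_\alpha)$. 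Splitting $L^2(\bbR,d\xi)=L^2(-\infty,0)\oplus L^2(0,\infty)$ and changing variables $\lambda=m(\xi)$ on each summand identifies multiplication by $m$ with multiplication by $\lambda$ on $L^2\bigl((0,\pi_\alpha),w\,d\lambda\bigr)\oplus L^2\bigl((0,\pi_\alpha),w\,d\lambda\bigr)$ with $w=\abs{(m^{-1})'}>0$; this operator is manifestly purely absolutely continuous with spectrum $[0,\pi_\alpha]$ of constant multiplicity two, the single point $\xi=0$ carrying no Lebesgue mass. Conjugating back by $\calM$ gives the proposition.

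The one point where I expect genuine care to be needed is the first step: when $-1/2<\alpha\le0$ the function $k(x)x^{-1}$ is \emph{not} integrable, so the convolution theorem cannot be invoked in its crude ``$L^1$'' form, and both boundedness and the diagonalisation must be obtained from the weaker integrability of $k(x)x^{-1/2}$ (equivalently, by passing to the Fourier realisation, where the convolution kernel $\kappa$ does lie in $L^1\cap L^2$). Once the multiplication-operator model is in place, everything reduces to routine identities for the Gamma function.
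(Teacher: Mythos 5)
Your argument is correct and is exactly the route the paper takes (the paper states the proposition as a summary of the preceding Mellin-transform diagonalisation and gives no further proof): boundedness via a Schur test with weight $t^{-1/2}$, conjugation by $\calM$ to multiplication by the Beta-integral multiplier $\abs{\Gamma(\tfrac12+\alpha+i\xi)}^2/\Gamma(1+2\alpha)$, and the evenness/strict monotonicity needed for the multiplicity-two a.c. conclusion, which you justify cleanly via the Weierstrass product. Your write-up simply supplies the details the paper leaves implicit; nothing is missing.
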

This includes the well-known case $\alpha=0$ of the Carleman operator; in this case $\pi_0=\pi$. 

In \cite{How2}, Howland considered integral Hankel operators on $L^2(\bbR_+)$ with kernels 
whose asymptotic behaviour is modelled on that of the Carleman operator. 
For a real-valued function $a=a(t)$, $t>0$ (we call it a \emph{kernel}),
let us denote by $\Hank(a)$ the Hankel operator in $L^2(\bbR_+)$ defined by 
$$
(\Hank(a)f)(t)=\int_0^\infty a(t+s)f(s)ds, \quad t>0.
$$
Howland considered kernels $a$ with the asymptotic behaviour 
\[
ta(t)\to 
\begin{cases}
a_0 & t\to0,
\\
a_\infty & t\to\infty.
\end{cases}
\label{a2}
\]
Among other things, in \cite{How2} he proved
%%%%%%%%%%%%%%%%%%%%%%%%
\begin{theorem}\cite{How2}\label{thmB}
%%%%%%%%%%%%%%%%%%%%%%%
Let $a\in C^2(\bbR_+)$ have the asymptotic behaviour \eqref{a2} and satisfy the 
regularity conditions
$$
(ta(t))''(t)=
\begin{cases}
O(t^{-2+\eps}), & t\to0,
\\
O(t^{-2-\eps}), & t\to\infty,
\end{cases}
$$
with some $\eps>0$. Then the a.c. spectrum of 
$\Hank(a)$ is given by 
\[
\sigma_\ac(\Hank(a))=[0,\pi a_0]\cup[0,\pi a_\infty],
\label{a4}
\]
where each interval contributes multiplicity one to the spectrum.
\end{theorem}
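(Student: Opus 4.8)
\emph{Proof strategy.}
The plan is to realise $\Hank(a)$ as a trace‑class perturbation of a direct sum of two ``one‑ended Carleman operators'', whose absolutely continuous parts are controlled, and then to apply the Kato--Rosenblum theorem. Split $L^2(\bbR_+)=L^2(0,1)\oplus L^2(1,\infty)$, let $P_0,P_\infty$ be the operators of multiplication by $\1_{(0,1)},\1_{(1,\infty)}$, and let $A_0:=\Hank(t^{-1})$ be the Carleman operator ($\alpha=0$ in \eqref{a1}). Take as model
\[
M:=a_0\,P_0A_0P_0+a_\infty\,P_\infty A_0P_\infty ,
\]
an operator on $L^2(\bbR_+)$ that is block‑diagonal for the above splitting. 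I claim that each of $P_0A_0P_0$ and $P_\infty A_0P_\infty$ has a.c. spectrum $[0,\pi]$ of multiplicity one. Indeed, the inversion $(Uf)(t)=t^{-1}f(1/t)$ is a selfadjoint unitary on $L^2(\bbR_+)$ which commutes with $A_0$ (whose kernel $(s+t)^{-1}$ is homogeneous of degree $-1$) and conjugates $P_0$ into $P_\infty$, so the two compressions are unitarily equivalent; moreover the off‑diagonal blocks $P_0A_0P_\infty$, $P_\infty A_0P_0$ are trace class, since the kernel $(s+t)^{-1}$ on $(0,1)\times(1,\infty)$ is smooth and of size $O(s^{-1})$ and a dyadic decomposition in $s$, together with the scaling law of the trace norm, yields a summable series. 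Hence $A_0-(P_0A_0P_0\oplus P_\infty A_0P_\infty)\in\Sch_1$, so by Kato--Rosenblum the a.c. part of $A_0$ is unitarily equivalent to the direct sum of the a.c. parts of the two compressions. By Proposition~\ref{prp.a1} the former has a.c. spectrum $[0,\pi]$ of constant multiplicity two; since the two summands are unitarily equivalent to one another and have spectra inside $[0,\pi]$, additivity of the multiplicity function forces each to have a.c. spectrum $[0,\pi]$ of multiplicity one. (Informally: restricting the logarithmic variable to a half‑line destroys the reflection $\xi\mapsto-\xi$ of the Mellin picture that produces the multiplicity two.)

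Next I show $\Hank(a)-M\in\Sch_1$. Using $\1=P_0+P_\infty$ and $\Hank(a)-c\,\Hank(t^{-1})=\Hank(a-ct^{-1})$,
\[
\Hank(a)-M=P_0\Hank(a-a_0/t)P_0+P_\infty\Hank(a-a_\infty/t)P_\infty+P_0\Hank(a)P_\infty+P_\infty\Hank(a)P_0 .
\]
The first diagonal block has kernel $\big((s+t)a(s+t)-a_0\big)/(s+t)$ on $(0,1)^2$. The hypotheses give $(ta)'(t)=O(t^{-1+\eps})$, hence $(ta)(t)-a_0=O(t^{\eps})$ as $t\to0$, so on the dyadic shell $s+t\sim 2^{-j}$ this kernel has size $O(2^{j(1-\eps)})$ and is smooth at that length‑scale. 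Since a kernel of size $\rho$ that is smooth at length‑scale $\delta$ and supported in a box of side $\delta$ has trace norm $O(\rho\delta)$, the shell $2^{-j}$ contributes $O(2^{-j\eps})$ and the series over $j\ge0$ converges. The second diagonal block is the same with $(ta)(t)-a_\infty=O(t^{-\eps})$ as $t\to\infty$, and the two off‑diagonal blocks are handled by a dyadic decomposition in the large variable (using only that $ta$ is bounded with $(ta)'=O(t^{-1\mp\eps})$). Hence $\Hank(a)-M\in\Sch_1$.

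Finally, $\Hank(a)$ is bounded (it is dominated by $A_0$ because $|a(t)|=O(t^{-1})$) and selfadjoint, so Kato--Rosenblum applies: the a.c. part of $\Hank(a)$ is unitarily equivalent to that of $M$, which — being block‑diagonal — is the direct sum of the a.c. parts of $a_0P_0A_0P_0$ and $a_\infty P_\infty A_0P_\infty$. By the first paragraph these have a.c. spectrum $[0,\pi a_0]$ and $[0,\pi a_\infty]$, each of multiplicity one, which is precisely \eqref{a4}.

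The main obstacle is the trace‑class estimate of the second paragraph. A Hilbert--Schmidt bound for $\Hank(a)-M$ is comparatively cheap, but upgrading it to a trace‑class bound is exactly what the quantitative conditions on $(ta)''$ are for: the power‑rate decay $(ta)(t)-a_{0,\infty}=O(t^{\pm\eps})$ controls the size of the perturbation shell by shell, while the second‑derivative bounds provide the smoothness at each scale needed to estimate the trace norm. Without any rate of decay the perturbation need not even be compact, and the a.c. spectrum can genuinely change.
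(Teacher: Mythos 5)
Your proposal is correct in outline, and it is essentially the strategy this paper uses — not for Theorem~\ref{thmB} itself (which is quoted from Howland, whose own proof goes through a Mourre estimate and therefore also excludes singular continuous spectrum, something the trace-class route cannot give), but for the generalisation Theorem~\ref{thm.a3}, of which Theorem~\ref{thmB} is the case $\alpha=0$, $w\equiv1$. Your splitting at $t=1$, the model operator $a_0P_0A_0P_0\oplus a_\infty P_\infty A_0P_\infty$, the inversion $U$ to show the two compressions are unitarily equivalent, the ``multiplicity $2=1+1$'' bookkeeping, and the final appeal to Kato--Rosenblum all match Lemmas~\ref{lma.b3}--\ref{lma.b5} and the concluding argument of Section~2. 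The one genuine difference is how the trace-class facts are obtained. The paper factorises $A_\alpha=L_\alpha^2$ with the Laplace-type operator $L_\alpha$, reduces the cross terms $P_0A_0P_\infty$ to products involving $\1_0L_\alpha\1_0$ and $\1_\infty L_\alpha\1_\infty$ (which become Hankel operators with Schwartz kernels after an exponential change of variables), and handles the kernel error via Rochberg's criterion (Lemma~\ref{lma.b2}); you instead estimate trace norms directly by dyadic decomposition of the kernels. Your route is more hands-on and avoids the factorisation trick entirely, at the cost of having to justify the scaling law ``size $\rho$, smooth at scale $\delta$, box of side $\delta$ $\Rightarrow$ trace norm $O(\rho\delta)$'' — which is true and is exactly where the $C^2$/second-derivative hypothesis is consumed (two derivatives give summable Fourier coefficients in each box), but which you should state as a lemma and prove rather than assert; note also that the sharp cutoffs $P_0,P_\infty$ should be replaced by a smooth dyadic partition of unity when invoking it, and that for the small-$t$ diagonal block the variable $s+t$ ranges over $(0,2)$, so the finitely many shells with $s+t\gtrsim1$ need the (trivial) boundedness of $ta(t)-a_0$ there. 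With that lemma supplied, the argument is complete.
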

We pause here to explain the convention that is used in \eqref{a4} and that will be
used in similar relations below. 
Relation \eqref{a4} means that the 
a.c. part of $\Hank(a)$ is unitarily equivalent to the direct sum
of the operators of multiplication by $\lambda$ in $L^2([0,\pi a_0],d\lambda)$ and 
in $L^2([0,\pi a_\infty],d\lambda)$. We also assume that if, for example, $a_0=0$, then 
the first term drops out of the union in \eqref{a4}; and that if, for example, $a_\infty<0$, then 
the interval $[0,\pi a_\infty]$ should be understood as $[\pi a_\infty,0]$.

Theorem~\ref{thmB} makes precise the intuition that for the Carleman operator $A_0$, corresponding to the kernel $a(t)=1/t$, 
both $t=0$ and $t=\infty$ are singular points and each of these points contributes multiplicity one 
to the spectrum. The aim of this paper is to show that the above intuition is also valid for operators 
$A_\alpha$ with all $\alpha>-1/2$. 
We do this by considering weighted Hankel operators. 
These operators generalise $A_\alpha$ in the same manner as the operators $\Hank(a)$ with kernels as in 
Theorem~\ref{thmB} generalise the Carleman operator $A_0$.

For a real-valued
kernel $a(t)$ and for a complex-valued function (we will call it a \emph{weight}) $w(t)$, $t>0$, 
we denote by $w\Hank(a)\overline{w}$ 
the weighted Hankel operator in $L^2(\bbR_+)$, given by 
$$
(w\Hank(a)\overline{w}f)(t)
=
\int_0^\infty w(t)a(t+s)\overline{w(s)}f(s)ds, 
\quad f\in L^2(\bbR_+).
$$
Under our assumptions below, this operator will be bounded. 
Since $a$ is assumed real-valued, the operator $w\Hank(a)\overline{w}$ 
is self-adjoint. Here and in what follows by a slight abuse of notation we use the same
symbol (in this case $w$) to denote both a function on $\bbR_+$ and the operator
of multiplication by this function in $L^2(\bbR_+)$. 

We fix $\alpha>-1/2$ and consider $a$, $w$ with the asymptotic behaviour
\[
t^{1+2\alpha}a(t)\to
\begin{cases}
a_0 & t\to0,
\\
a_\infty & t\to\infty,
\end{cases}
\quad
t^{-\alpha}w(t)
\to
\begin{cases}
b_0 & t\to0,
\\
b_\infty & t\to\infty.
\end{cases}
\label{a6}
\]
The aim of this paper is to prove
%%%%%%%%%%%%%%%%%%%
\begin{theorem}\label{thm.a3}
%%%%%%%%%%%%%%%%%%%
Fix $\alpha>-1/2$. 
Let $a\in C^2(\bbR_+)$ be a real-valued kernel such that 
for some $a_0,a_\infty\in\bbR$ and for some $\eps>0$, we have
\begin{align}
\frac{d^m}{dt^m}(t^{1+2\alpha}a(t)-a_0)
&=O(t^{-m+\eps}),\quad t\to0,
\label{a7b}
\\
\frac{d^m}{dt^m}(t^{1+2\alpha}a(t)-a_\infty )
&=O(t^{-m-\eps}),\quad t\to\infty,
\label{a7c}
\end{align}
with $m=0,1,2$. 
Assume further that the complex valued weight $w(t)$ is such that 
$t^{-\alpha}w(t)$ is bounded on $\bbR_+$ and 
for some $b_0,b_\infty\in\bbC$, 
\[
\int_0^1\Abs{\abs{w(t)}^2t^{-2\alpha}-\abs{b_0}^2}t^{-1}dt<\infty,
\quad
\int_1^\infty\Abs{\abs{w(t)}^2t^{-2\alpha}-\abs{b_\infty}^2}t^{-1}dt<\infty.
\label{a7a}
\]
Then the a.c. spectrum of $w\Hank(a)\overline{w}$ is 
given by 
$$
\sigma_\ac(w\Hank(a)\overline{w})
=
[0,\pi_\alpha a_0\abs{b_0}^2]\cup[0,\pi_\alpha a_\infty\abs{b_\infty}^2],
$$
where each interval contributes multiplicity one to the spectrum. 
\end{theorem}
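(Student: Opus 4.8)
The plan is to use the Kato--Rosenblum theorem: since a trace class perturbation leaves the absolutely continuous part of a self-adjoint operator (and its multiplicity function) unchanged, it suffices to show that $w\Hank(a)\overline{w}$ is, modulo a trace class operator and a unitary transformation, a transparent \emph{model operator}. As a first simplification, conjugating by the unimodular multiplier $w/\abs{w}$ reduces us to $w\geq0$, after which $b_0,b_\infty$ may be replaced by $\abs{b_0},\abs{b_\infty}$; set $c_0=a_0\abs{b_0}^2$ and $c_\infty=a_\infty\abs{b_\infty}^2$. I would then pass to the logarithmic variable $t=e^x$, a unitary identification $L^2(\bbR_+,dt)\cong L^2(\bbR,dx)$ under which $A_\alpha$ becomes convolution $\Phi$ by the even, smooth profile $\phi(u)=(2\cosh(u/2))^{-1-2\alpha}$, which decays like $e^{-(1/2+\alpha)\abs{u}}$ (here $\alpha>-1/2$ is used); by Proposition~\ref{prp.a1}, $\Phi\geq0$ and $\sigma_\ac(\Phi)=[0,\pi_\alpha]$ with multiplicity two. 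In these variables $w\Hank(a)\overline{w}$ becomes the integral operator $T$ with kernel $\omega(x)\,\omega(y)\,\phi(x-y)\,b\bigl(\log(e^x+e^y)\bigr)$, where $\omega(x)=e^{-\alpha x}w(e^x)$ is bounded and tends to $\abs{b_0}$ as $x\to-\infty$, to $\abs{b_\infty}$ as $x\to+\infty$, and $b(x)=e^{(1+2\alpha)x}a(e^x)$; by \eqref{a7b}--\eqref{a7c}, $b$ together with its first two derivatives converges to $a_0$ (resp.\ $a_\infty$) at rate $O(e^{-\eps\abs{x}})$ as $x\to-\infty$ (resp.\ $+\infty$), while \eqref{a7a} becomes $\int_{-\infty}^0\abs{\omega(x)^2-\abs{b_0}^2}\,dx<\infty$ and the analogous bound at $+\infty$.

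The core of the argument is a chain of trace class reductions. Fix a smooth partition of unity $1=\eta_-+\eta_+$ on $\bbR$ with $\eta_\mp$ supported near $\mp\infty$. \emph{(i)} The off-diagonal terms $\eta_-T\eta_+$ and $\eta_+T\eta_-$ are trace class: their kernels still carry the factor $\phi(x-y)$ while $x$ and $y$ are confined to (overlapping) opposite half-lines, so the kernels are smooth and decay exponentially in $\abs{x}+\abs{y}$; after reflecting one variable this is a Hankel operator with a Schwartz-type symbol, covered by standard trace class criteria. \emph{(ii)} On each half-line I replace $\omega$ by its limiting value and $b(\log(e^x+e^y))$ by the corresponding constant $a_0$ or $a_\infty$; the resulting errors are trace class. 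For the $a$-part this is the weighted analogue of Howland's estimate, where the trace norm of the difference of two convolution-type operators is bounded by an integral of (first and second differences of) the kernel against the exponential weight, finite precisely because of the $C^2$ hypotheses with their $\eps$-rates. For the $\omega$-part one uses $\omega-\abs{b_0}\in L^1\cap L^\infty\subset L^2$ near $-\infty$ (this is where \eqref{a7a} enters, via $\abs{\omega-\abs{b_0}}\le\abs{\omega^2-\abs{b_0}^2}/\abs{b_0}$ when $b_0\neq0$) and factors each error through Hilbert--Schmidt operators of the shape $M_\rho\Phi^{1/2}$ with $\rho\in L^2$, $M_\rho$ denoting multiplication by $\rho$. \emph{(iii)} Finally the smooth cut-offs are replaced by the sharp ones $\chi_{(-\infty,0)},\chi_{(0,\infty)}$, once more a trace class modification. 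The outcome is that $w\Hank(a)\overline{w}$ is unitarily equivalent, modulo trace class, to $c_0W'\oplus c_\infty W$, where $W=\chi_{(0,\infty)}\Phi\chi_{(0,\infty)}$ is the half-line truncation of $\Phi$, $W'=\chi_{(-\infty,0)}\Phi\chi_{(-\infty,0)}$, and the reflection $x\mapsto-x$, which fixes $\phi$, gives $W'\cong W$.

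It then remains to compute $\sigma_\ac$ of the model. Applying the corner estimate of \emph{(i)} to $\Phi$ itself yields $\Phi=W'\oplus W+(\text{trace class})$, so that Kato--Rosenblum together with Proposition~\ref{prp.a1} and the symmetry $W'\cong W$ force $\sigma_\ac(W)=[0,\pi_\alpha]$ with multiplicity \emph{one}. Multiplying by the real constants $c_0,c_\infty$ (and adopting the conventions of the statement: an interval reverses orientation if its constant is negative and disappears if it vanishes, the corresponding channel being then trace class outright), $c_0W'$ and $c_\infty W$ have a.c.\ spectra $[0,\pi_\alpha c_0]$ and $[0,\pi_\alpha c_\infty]$, each of multiplicity one. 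Their orthogonal sum has a.c.\ spectrum $[0,\pi_\alpha c_0]\cup[0,\pi_\alpha c_\infty]$ with the multiplicities adding on any overlap, and by Kato--Rosenblum the same is true of $w\Hank(a)\overline{w}$, which is the assertion.

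I expect the main obstacle to be step \emph{(ii)}: establishing the trace class estimates under hypotheses this weak --- no regularity of $w$ at all, only the integral conditions \eqref{a7a}, and merely $C^2$ on $a$. Naive bounds are useless here (multiplication by an $L^1$ function is not even compact), so one must consistently exploit the convolution structure and the positivity and exponential decay of $\phi$ to route every error through Hilbert--Schmidt factors; this is what pins down the precise form of \eqref{a7b}--\eqref{a7a}. A secondary, more tedious point is the bookkeeping for the degenerate cases in which one of $a_0,a_\infty,b_0,b_\infty$ vanishes, where the relevant channel must be shown to be trace class on the nose.
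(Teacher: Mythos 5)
Your overall architecture --- trace-class surgery followed by the Kato--Rosenblum theorem, with the multiplicity of the half-line model pinned down by the reflection symmetry together with Proposition~\ref{prp.a1} --- is exactly the paper's, transplanted to logarithmic coordinates (where $A_\alpha$ becomes convolution by $\phi(u)=(2\cosh(u/2))^{-1-2\alpha}$, instead of being factored as $L_\alpha^2$ through the Laplace-type operator \eqref{b10}). Your endgame, $\Phi=W'\oplus W+\Sch_1$ plus $W\cong W'$ forcing multiplicity one on each half-line, is precisely the argument of Lemma~\ref{lma.b4}, and the initial conjugation by the unimodular multiplier $w/\abs{w}$ is a legitimate way to dispose of the phase of the weight.

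The genuine gap is in your step \emph{(ii)} for the weight. After reducing to $\omega\ge0$, hypothesis \eqref{a7a} gives only $\omega^2-\abs{b_0}^2\in L^1(-\infty,0)$, hence (for $b_0\neq0$) $\rho:=(\omega-\abs{b_0})\chi_{(-\infty,0)}\in L^1\cap L^\infty\subset L^2$. But since $\omega(x)\omega(y)-\abs{b_0}^2=(\omega(x)-\abs{b_0})\omega(y)+\abs{b_0}(\omega(y)-\abs{b_0})$, the error produced by replacing $\omega$ by $\abs{b_0}$ consists of terms of the form $M_\rho\,\Phi\,M_{\omega}$ with $\rho$ sitting on the \emph{outside}; your factorisation $(M_\rho\Phi^{1/2})(\Phi^{1/2}M_\omega)$ exhibits this only as (Hilbert--Schmidt)$\times$(bounded), i.e.\ as a Hilbert--Schmidt operator, and no splitting $\rho=\rho_1\rho_2$ improves this, because only one factor of $\Phi^{1/2}$ is available on the side where $\rho$ lives. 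Hilbert--Schmidt is not enough: by Weyl--von Neumann--Kuroda an arbitrarily small $\Sch_2$ perturbation can destroy the a.c.\ spectrum, so Kato--Rosenblum genuinely requires $\Sch_1$ (and $\rho\in L^1\cap L^\infty$ does not put $M_\rho\Phi$ itself in $\Sch_1$; the Birman--Solomyak criterion would need $\rho\in\ell^1(L^2)$). The missing idea is the paper's Lemma~\ref{lma.b5}: write the half-line piece as $SS^*$ with $S=M_{\omega}\chi_{(-\infty,0)}\Phi^{1/2}$ and pass to $S^*S=\Phi^{1/2}\chi_{(-\infty,0)}M_{\omega^2}\chi_{(-\infty,0)}\Phi^{1/2}$, which moves the weight \emph{into the middle}; then $\omega^2-\abs{b_0}^2=q_1q_2$ with $q_1,q_2\in L^2$ produces a product of \emph{two} Hilbert--Schmidt factors, hence a trace class error, uniformly in $b_0$ (including $b_0=0$). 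A secondary soft spot: the replacement of $b(\log(e^x+e^y))$ by the constants $a_0,a_\infty$ on sharp quadrants is only asserted to give a trace class error (``weighted analogue of Howland's estimate''); the paper instead splits $a=a_0\f_0+a_\infty\f_\infty+g$ with smooth model kernels satisfying $\f_0+\f_\infty=t^{-1-2\alpha}$ exactly, so that the single global error $g$ falls under Rochberg's criterion (Lemma~\ref{lma.b2}). That part of your plan is plausible and repairable, but it is where a real estimate must be supplied.
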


\begin{remark*}
\begin{enumerate}
\item
Howland in \cite{How2} uses Mourre's estimate and proves also the absence
of singular continuous spectrum in the framework of Theorem~\ref{thmB}. 
Here we use the trace class method of scattering theory. This method  
is technically simpler to use but it gives
no information on the singular continuous spectrum. 

\item
Conditions on $a$ and $w$ in Theorem~\ref{thm.a3} are far from being sharp. 
For example, 
it is not difficult to relax conditions \eqref{a7b}, \eqref{a7c} by replacing 
$t^{\pm \eps}$ by $\abs{\log t}^{-1-\eps}$, see \cite{PYa2} for a related calculation.

\item
Howland's results of \cite{How2} for unweighted Hankel operators were 
extended in \cite{PYa} to kernels $a(t)$ with more complicated (oscillatory) asymptotic behaviour at $t\to\infty$. 

\item
An important precursor to Howland's work \cite{How2} was Power's analysis \cite{Pow} of the essential spectrum 
of Hankel operators with piecewise continuous symbols. In this context we note that the essential spectrum of 
the weighted Hankel operators considered in Theorem~\ref{thm.a3} is easy to describe. 
By following the method of proof of this theorem and using Weyl's theorem on the preservation
of the essential spectrum under compact perturbations instead of the Kato-Rosenblum theorem, one can check
that if both $t^{1+2\alpha}a(t)$ and $t^{-\alpha}w(t)$ are bounded and satisfy the asymptotic relation \eqref{a6},
then the essential spectrum of $w\Hank(a)\overline{w}$ is given by the union of the intervals
$$
\sigma_\ess(w\Hank(a)\overline{w})
=
[0,\pi_\alpha a_0\abs{b_0}^2]\cup[0,\pi_\alpha a_\infty\abs{b_\infty}^2].
$$

\item
Boundedness and Schatten class conditions for weighted Hankel operators
with the power weights $w_\alpha(t)=t^\alpha$ have been studied by several authors;
see e.g. \cite{Rochberg,JP} and the references in \cite[Section 2]{AP}.

\item
In \cite{KS}, interesting non-trivial discrete analogues of the operators $A_\alpha$ are analysed. 
These operators act in $\ell^2(\bbZ_+)$ and are formally defined as infinite matrices with entries of the form
\[
w(j)a(j+k)w(k), \quad j,k\in\bbZ_+.
\label{a8}
\]
For each $\alpha>-1/2$, 
the authors of \cite{KS} describe some families of sequences $\{a(j)\}$ and $\{w(j)\}$ with the asymptotic
behaviour 
$$
j^{1+2\alpha}a(j)\to1, \quad j^{-\alpha}w(j)\to1,\quad j\to\infty,
$$
for which the operators \eqref{a8} are explicitly diagonalised. 
It turns out that the spectrum of each of these operators is purely a.c., has 
multiplicity one and coincides with the interval $[0,\pi_\alpha]$, where $\pi_\alpha$ is the same as in \eqref{a1a}. 
\end{enumerate}
\end{remark*}

%%%%%%%%%%%%%%%%%%%%%%%%%%%%
\section{Proof of Theorem~\ref{thm.a3}}
%%%%%%%%%%%%%%%%%%%%%%%%%%%%

\subsection{Outline of the proof}
Let $a$, $w$ be as in Theorem~\ref{thm.a3}.
First we identify two suitable ``model'' kernels $\f_0$ and $\f_\infty$
in $C^\infty(\bbR_+)$ such that $\f_0(t)+\f_\infty(t)=t^{-1-2\alpha}$ and
\[
\frac{d^m}{dt^m}
\f_0(t) =O(e^{-t/2}), \quad t\to\infty,
\quad\text{ and }\quad
\frac{d^m}{dt^m}
\f_\infty(t)=O(1), \quad t\to0
\label{a8a}
\]
for all $m\geq0$. 
Then we write the kernel $a$ as
$$
a(t)=a_0\f_0(t)+a_\infty \f_\infty(t)+\text{error},
$$
where the error term is negligible in a suitable sense both as $t\to0$ and as $t\to\infty$. 
Similarly, we write
$$
\abs{w(t)}^2=\abs{b_0}^2\1_0(t)t^{2\alpha}+\abs{b_\infty}^2\1_\infty(t)t^{2\alpha}+\text{error},
$$
where $\1_0$ and $\1_\infty$ are the characteristic functions of the intervals $(0,1)$ and $(1,\infty)$ respectively
and the error term is again negligible in a suitable sense. 
With these representations, denoting $w_\alpha(t)=t^{\alpha}$, we write
\[
w\Hank(a)w
=
a_0\abs{b_0}^2 \1_0w_\alpha\Hank(\f_0)w_\alpha\1_0
+
a_\infty\abs{b_\infty}^2 \1_\infty w_\alpha\Hank(\f_\infty)w_\alpha\1_\infty
+\text{error}
\label{a10}
\]
and prove that the error term here is a trace class operator. 
By the Kato-Rosenblum theorem (see e.g. \cite[Theorem XI.8]{RS3}), this reduces the problem to the description of the 
a.c. spectrum of the sum of the first two operators in the right side of \eqref{a10}. 
Observe that these two operators act in the orthogonal subspaces
$L^2(0,1)$ and $L^2(1,\infty)$. 
This reduces the problem to identifying the a.c. spectra of
\[
\1_0w_\alpha\Hank(\f_0)w_\alpha\1_0
\quad \text{ and }\quad
\1_\infty w_\alpha\Hank(\f_\infty) w_\alpha\1_\infty.
\label{a11}
\]
We are unable to identify the spectra of these operators directly and therefore we resort to 
the following trick. 
We observe that the operator $A_\alpha$, whose spectrum is given by Proposition~\ref{prp.a1},
can also be represented in the form \eqref{a10} with $a_0\abs{b_0}^2=a_\infty\abs{b_\infty}^2=1$. 
This allows us to conclude that the a.c. spectrum of each of the two operators in \eqref{a11} 
coincides with $[0,\pi_\alpha]$ and has multiplicity one. 
Now we can go back to \eqref{a10} and finish the proof.

\subsection{Factorisation of $A_\alpha$}
For $\alpha>-1/2$, let $L_\alpha$ be the integral operator in $L^2(\bbR_+)$ given by 
\[
(L_\alpha f)(t)
=
\frac{1}{\sqrt{\Gamma(1+2\alpha)}}
\int_0^\infty
t^\alpha s^\alpha e^{-st} f(s)ds, \quad t>0.
\label{b10}
\]
The boundedness of $L_\alpha$ is easy to establish by the Schur test. 
It is evident that $L_\alpha$ is self-adjoint. 
A direct calculation gives the identity
$$
A_\alpha=L_\alpha^2. 
$$
This factorisation is an important technical ingredient of the proof. 

\subsection{Trace class properties of auxiliary operators}

%%%%%%%%%%%%%%%%%%%
\begin{lemma}\label{lma.b1}
%%%%%%%%%%%%%%%%%%%
Let $L_\alpha$ be the operator \eqref{b10} and 
let $u$ be a locally integrable function on $\bbR_+$. 
Then the operator $uL_\alpha$ is in the Hilbert-Schmidt class if and only if
$$
\int_0^\infty \abs{u(t)}^2\frac{dt}{t}<\infty.
$$
\end{lemma}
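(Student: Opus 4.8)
The plan is to compute the Hilbert–Schmidt norm of $uL_\alpha$ directly from the integral kernel and then extract the stated condition. The operator $uL_\alpha$ has integral kernel
\[
K(t,s) = \frac{1}{\sqrt{\Gamma(1+2\alpha)}}\, u(t)\, t^\alpha s^\alpha e^{-st},
\]
so $uL_\alpha$ is Hilbert–Schmidt if and only if $\iint_{\bbR_+^2}\abs{K(t,s)}^2\,dt\,ds<\infty$, i.e.
\[
\frac{1}{\Gamma(1+2\alpha)}\int_0^\infty\abs{u(t)}^2 t^{2\alpha}\left(\int_0^\infty s^{2\alpha}e^{-2st}\,ds\right)dt<\infty.
\]
The inner integral is elementary: substituting $\sigma=2st$ gives $\int_0^\infty s^{2\alpha}e^{-2st}\,ds=(2t)^{-1-2\alpha}\Gamma(1+2\alpha)$, which is finite and positive precisely because $\alpha>-1/2$. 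Substituting this back, the double integral equals $2^{-1-2\alpha}\int_0^\infty\abs{u(t)}^2 t^{2\alpha}t^{-1-2\alpha}\,dt = 2^{-1-2\alpha}\int_0^\infty\abs{u(t)}^2\,dt/t$. Hence $uL_\alpha\in\Sch_2$ if and only if $\int_0^\infty\abs{u(t)}^2\,t^{-1}\,dt<\infty$, which is exactly the claim.

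Two small points need care. First, one should note that local integrability of $u$ together with the finiteness of $\int\abs{u}^2\,dt/t$ does \emph{not} a priori guarantee that $K$ is a genuine (measurable) kernel of a bounded operator before we know the integral is finite; but this is handled by the standard fact that a measurable $K$ with $\iint\abs{K}^2<\infty$ automatically defines a Hilbert–Schmidt operator, and conversely the Hilbert–Schmidt norm of an integral operator equals the $L^2$ norm of its kernel. Since $u$ is locally integrable and the Gaussian-type factor $t^\alpha s^\alpha e^{-st}$ is smooth and rapidly decaying away from the axes, $K$ is measurable on $\bbR_+^2$, so the Tonelli/Fubini manipulation above is legitimate regardless of whether the final integral converges. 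Second, one should record that $\alpha>-1/2$ is used exactly once, to ensure the Gamma integral $\int_0^\infty s^{2\alpha}e^{-2st}\,ds$ converges at $s=0$.

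There is essentially no obstacle here: the argument is a one-line Fubini computation. The only mild subtlety — and the place a careful reader will want a sentence of justification — is the use of the identity ``Hilbert–Schmidt norm of an integral operator $=$ $L^2$-norm of its kernel,'' and the observation that the inner $s$-integral produces precisely the power $t^{-1-2\alpha}$ that cancels the weight $t^{2\alpha}$ coming from $L_\alpha$, leaving the scale-invariant measure $dt/t$. I would state the computation as a short displayed chain of equalities for the Hilbert–Schmidt norm squared and conclude immediately.
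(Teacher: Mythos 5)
Your proof is correct and is essentially the paper's own argument: the paper likewise computes the Hilbert--Schmidt norm as the $L^2$ norm of the kernel, evaluates the inner $s$-integral to $(2t)^{-1-2\alpha}\Gamma(1+2\alpha)$, and arrives at $2^{-1-2\alpha}\int_0^\infty\abs{u(t)}^2\,t^{-1}dt$. Your additional remarks on Tonelli and on where $\alpha>-1/2$ is used are fine but not needed beyond what the paper records.
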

\begin{proof}
A direct evaluation of the Hilbert-Schmidt norm:
$$
\frac1{\Gamma(1+2\alpha)}
\int_0^\infty \int_0^\infty s^{2\alpha}t^{2\alpha}e^{-2ts}\abs{u(t)}^2dt\,ds
=
2^{-1-2\alpha}\int_0^\infty \abs{u(t)}^2\frac{dt}{t}.
\qedhere
$$
\end{proof}

A necessary and sufficient condition is known (see \cite{Rochberg}) 
for $w_\alpha \Hank(g)w_\alpha$ to belong to trace class in terms of $g$ being in a certain Besov class. 
For our purposes it suffices to use a simple sufficient condition expressed in elementary terms. 
%%%%%%%%%%%%%%%
\begin{lemma}\label{lma.b2}
%%%%%%%%%%%%%%%
Let $g\in C^2(\bbR_+)$ be such that for some $\eps>0$ and for $m=0,1,2$, one has
$$
\frac{d^m}{dt^m}(t^{1+2\alpha}g(t))
=
\begin{cases}
O(t^{-m+\eps}), & t\to0,
\\
O(t^{-m-\eps}), & t\to\infty.
\end{cases}
$$
Then $w_\alpha\Hank(g)w_\alpha$ is trace class. 
\end{lemma}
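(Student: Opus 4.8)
The plan is to split $g$ into dyadic pieces, rescale each to unit spatial scale by the natural dilations, reduce matters to a uniform trace-norm bound for ``bump'' kernels supported in $[1/2,2]$, prove that bound by an elementary Fourier-series argument, and sum a geometric series. (Note first that $w_\alpha\Hank(g)w_\alpha$ is bounded: writing $h(u)=u^{1+2\alpha}g(u)$, which is bounded by the hypotheses, the kernel of $w_\alpha\Hank(g)w_\alpha$ is dominated in absolute value by $\norm{h}_\infty$ times the kernel of $A_\alpha$, and $A_\alpha$ has nonnegative kernel and is bounded.)

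I would first reduce $\eps$: since the hypotheses for some $\eps>0$ imply the same for every smaller $\eps$, assume $0<\eps<1$; then $\abs{h^{(m)}(u)}\le C\min(u^{\eps-m},u^{-\eps-m})$ for $m=0,1,2$. Fix $\eta\in C_c^\infty$ with $\supp\eta\subset[1/2,2]$ and $\sum_{j\in\bbZ}\eta(2^{-j}u)=1$ for $u>0$, put $g_j(u)=\eta(2^{-j}u)g(u)$, so that $g=\sum_{j}g_j$ with $\supp g_j\subset[2^{j-1},2^{j+1}]$, and set $\gamma_j(v)=\eta(v)v^{-1-2\alpha}h(2^jv)$, which is in $C^2$ with support in $[1/2,2]$ and satisfies $g_j(u)=2^{-j(1+2\alpha)}\gamma_j(2^{-j}u)$. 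Using the unitary dilations $(U_\mu f)(t)=\mu^{1/2}f(\mu t)$, a direct kernel computation gives $w_\alpha\Hank(g_j)w_\alpha=U_{2^{-j}}\bigl(w_\alpha\Hank(\gamma_j)w_\alpha\bigr)U_{2^{-j}}^{*}$, hence $\norm{w_\alpha\Hank(g_j)w_\alpha}_{\Sch_1}=\norm{w_\alpha\Hank(\gamma_j)w_\alpha}_{\Sch_1}$. Moreover, applying the chain rule to $h(2^j\cdot)$ (the factor $2^{jm}$ cancelling against $(2^jv)^{-m}$ for $v\in[1/2,2]$), the bounds on $h$ yield $\norm{\gamma_j}_{C^2([1/2,2])}\le C_0\,2^{-\eps\abs{j}}$ with $C_0$ independent of $j$.

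It then remains to prove the uniform bound $\norm{w_\alpha\Hank(\theta)w_\alpha}_{\Sch_1}\le C_\alpha\norm{\theta}_{C^2}$ for every $\theta\in C^2$ with $\supp\theta\subset[1/2,2]$, where $C_\alpha$ depends only on $\alpha$. Here the kernel $t^\alpha\theta(t+s)s^\alpha$ vanishes unless $t,s\in[0,2)$, so one may replace $\theta$ by its $4$-periodic extension (which is $C^2$, since $\theta$ vanishes together with its first two derivatives at $1/2$ and $2$) and expand $\theta(u)=\sum_{n}c_n e^{i\pi nu/2}$ with $\sum_n\abs{c_n}\le C\norm{\theta}_{C^2}$. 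This exhibits $w_\alpha\Hank(\theta)w_\alpha$ as the $\Sch_1$-convergent sum $\sum_n c_n\langle\,\cdot\,,\psi_n\rangle e_n$ of rank-one operators (with $\langle f,g\rangle=\int f\overline g$), where $e_n(t)=e^{i\pi nt/2}t^\alpha\1_{[0,2)}(t)$ and $\psi_n(s)=e^{-i\pi ns/2}s^\alpha\1_{[0,2)}(s)$; since $\alpha>-1/2$, the norms $\norm{e_n}^2=\norm{\psi_n}^2=\int_0^2 t^{2\alpha}\,dt$ are finite and independent of $n$, so $\norm{w_\alpha\Hank(\theta)w_\alpha}_{\Sch_1}\le\sum_n\abs{c_n}\norm{e_n}\norm{\psi_n}\le C_\alpha\norm{\theta}_{C^2}$.

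Putting these together, $\sum_j\norm{w_\alpha\Hank(g_j)w_\alpha}_{\Sch_1}=\sum_j\norm{w_\alpha\Hank(\gamma_j)w_\alpha}_{\Sch_1}\le C_\alpha C_0\sum_j 2^{-\eps\abs{j}}<\infty$, so $\sum_j w_\alpha\Hank(g_j)w_\alpha$ converges in $\Sch_1$; since $\norm{u^{1+2\alpha}\sum_{\abs{j}>N}g_j}_\infty\to0$ as $N\to\infty$, the same domination of kernels by $A_\alpha$ shows the partial sums also converge to $w_\alpha\Hank(g)w_\alpha$ in operator norm, so the limit is $w_\alpha\Hank(g)w_\alpha$, which is therefore trace class. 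I expect the main technical point to be the bump estimate — keeping the Fourier side clean (taking the period large enough to absorb $t+s$, and checking that the periodic extension really is $C^2$) and tracking the rescaling constants in the dyadic step honestly; beyond that the argument is just a convergent geometric series.
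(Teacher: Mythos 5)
Your proof is correct, but it takes a genuinely different route from the paper. The paper disposes of this lemma in a few lines by invoking Rochberg's trace-ideal criterion (Lemma 2 of \cite{Rochberg}): it suffices that the Fourier--Laplace transform $\hat k$ of $k(t)=t^{2+2\alpha}g(t)$ be integrable over the upper half-plane, and this is checked by integrating by parts once (for $\abs{\zeta}\le 1$) and twice (for $\abs{\zeta}>1$) against the hypotheses on $g$. You instead give a self-contained argument: a dyadic (Littlewood--Paley) decomposition of the kernel, exact dilation covariance $w_\alpha\Hank(g_j)w_\alpha=U_{2^{-j}}\,w_\alpha\Hank(\gamma_j)w_\alpha\,U_{2^{-j}}^*$ to reduce to unit scale, a rank-one Fourier-series decomposition of the bump pieces (valid since $\norm{e_n}^2=\int_0^2 t^{2\alpha}dt<\infty$ precisely because $\alpha>-1/2$), and a geometric series in $2^{-\eps\abs{j}}$ coming from the $\min(u^{\eps-m},u^{-\eps-m})$ bounds on $h^{(m)}$. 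All the steps check out, including the identification of the $\Sch_1$-limit with $w\Hank(g)w$ via the operator-norm domination by $A_\alpha$. What your approach buys is independence from the Besov-class machinery of \cite{Rochberg} (in effect you reprove the sufficient half of that criterion in the special case at hand, in the spirit of the standard ``$B^1_{1,1}$ implies trace class'' argument for Hankel operators); what the paper's approach buys is brevity, at the cost of leaning on a cited result.
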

\begin{proof}
Lemma 2 in \cite{Rochberg} asserts that $w_\alpha\Hank(g)w_\alpha$ is trace class
if the function $k(t)=t^{2+2\alpha}g(t)$ satisfies the condition
$$
\int_{-\infty}^\infty \int_0^\infty \abs{\hat k(x+iy)}dy\,dx<\infty,
$$
where 
$$
\hat k(\zeta)=\int_{0}^\infty k(t)e^{i\zeta t}dt, \quad \zeta=x+iy, \quad y>0.
$$
Let us check that this condition is satisfied under our hypothesis on $g$. 
First note that under our hypothesis, we have
\[
k^{(m)}(t)=O(t^{1-m+\eps}), \quad t\to0, 
\qquad
k^{(m)}(t)=O(t^{1-m-\eps}),
\quad t\to\infty.
\label{b11}
\]
Next, 
integrating by parts once and twice in the expression for $\hat k$, we get
$$
\hat k(\zeta)
=
-\frac1{i\zeta}\int_0^\infty k'(t)e^{i\zeta t}dt
=
\frac1{(i\zeta)^2}\int_0^\infty k''(t)e^{i\zeta t}dt, \quad \Im \zeta>0, 
$$
and therefore we have the estimates
\[
\abs{\hat k(\zeta)}\leq\frac1{\abs{\zeta}}\int_0^\infty \abs{k'(t)}e^{-yt}dt, 
\quad
\abs{\hat k(\zeta)}\leq\frac1{\abs{\zeta}^2}\int_0^\infty \abs{k''(t)}e^{-yt}dt
\label{b9}
\]
for $\zeta=x+iy$. For $\abs{\zeta}\leq1$ we use the first one of these estimates, 
which together with \eqref{b11} yields
$$
\abs{\hat k(\zeta)}
\leq 
\frac{C}{\abs{\zeta}}\int_0^1 t^\eps e^{-yt}dt+
\frac{C}{\abs{\zeta}}\int_1^\infty t^{-\eps} e^{-yt}dt
\leq C\frac{1+y^{-1+\eps}}{\abs{\zeta}}.
$$
The right side here is integrable in the domain $\abs{\zeta}<1$, $\Im \zeta>0$, if $0<\eps<1$.

For $\abs{\zeta}>1$ we use the second estimate in \eqref{b9}, which yields
$$
\abs{\hat k(\zeta)}
\leq 
\frac{C}{\abs{\zeta}^2}\int_0^1 t^{-1+\eps} e^{-yt}dt+
\frac{C}{\abs{\zeta}^2}\int_1^\infty t^{-1-\eps} e^{-yt}dt
\leq C\frac{y^{-\eps}+e^{-y}}{\abs{\zeta}^2},
$$
and again the right side is integrable in the domain $\abs{\zeta}>1$, $\Im \zeta>0$, if $0<\eps<1$. 
\end{proof}

The following lemma allows us to get rid of the cross terms that are hidden in
the error term in \eqref{a10}.
%%%%%%%%%%%%%%%
\begin{lemma}\label{lma.b3}
%%%%%%%%%%%%%%%
The operators $\1_0L_\alpha\1_0$ and $\1_\infty L_\alpha\1_\infty$ are trace class. 
Further, the operators $\1_0 A_\alpha\1_\infty$ and $\1_\infty A_\alpha\1_0$ are trace class. 
\end{lemma}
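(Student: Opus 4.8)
The plan rests on two reductions. First, I would show that $\1_0L_\alpha\1_0$ and $\1_\infty L_\alpha\1_\infty$ are trace class by means of a logarithmic change of variable that turns each of them into an ordinary Hankel operator with a very regular symbol, to which Lemma~\ref{lma.b2} applies. Second, I would deduce the trace class property of the cross terms $\1_0A_\alpha\1_\infty$ and $\1_\infty A_\alpha\1_0$ from the factorisation $A_\alpha=L_\alpha^2$ together with the first part.

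To treat $\1_\infty L_\alpha\1_\infty$, regarded as an operator on $L^2(1,\infty)$, I would introduce the unitary map $U\colon L^2(1,\infty)\to L^2(\bbR_+)$ given by $(Uf)(x)=e^{x/2}f(e^x)$. Substituting $t=e^x$ and $s=e^y$ in the kernel $\Gamma(1+2\alpha)^{-1/2}t^\alpha s^\alpha e^{-st}$ of $L_\alpha$, a short computation gives $U(\1_\infty L_\alpha\1_\infty)U^*=\Hank(\psi)$, where
\[
\psi(u)=\frac{1}{\sqrt{\Gamma(1+2\alpha)}}\,e^{(\alpha+\frac12)u-e^{u}},\qquad u>0.
\]
The function $\psi$ is smooth, bounded as $u\to0$, and decays together with all its derivatives faster than any power of $u$ as $u\to\infty$, so it satisfies the hypotheses of Lemma~\ref{lma.b2} applied with the value $0$ of the parameter (so that the weight $w_\alpha$ is absent and $w_0\Hank(\psi)w_0=\Hank(\psi)$); one may take any $\eps\in(0,1)$, the estimates required near $u=0$ being immediate since there $u\psi(u)=O(u)$ and $(u\psi(u))^{(m)}=O(1)$ for $m=1,2$. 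Hence $\Hank(\psi)$, and with it $\1_\infty L_\alpha\1_\infty$, is trace class. The operator $\1_0L_\alpha\1_0$ would be handled in exactly the same way, using instead the unitary $U_0\colon L^2(0,1)\to L^2(\bbR_+)$, $(U_0f)(x)=e^{-x/2}f(e^{-x})$: one obtains $\Hank(\psi_0)$ with $\psi_0(u)=\Gamma(1+2\alpha)^{-1/2}e^{-(\alpha+\frac12)u-e^{-u}}$, a symbol bounded near $u=0$ and decaying exponentially (at the positive rate $\alpha+\frac12$) as $u\to\infty$, so Lemma~\ref{lma.b2} applies once more.

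For the cross terms I would use $A_\alpha=L_\alpha^2$ and insert $\1_0+\1_\infty=I$:
\[
\1_0A_\alpha\1_\infty
=\1_0L_\alpha(\1_0+\1_\infty)L_\alpha\1_\infty
=(\1_0L_\alpha\1_0)(L_\alpha\1_\infty)+(\1_0L_\alpha)(\1_\infty L_\alpha\1_\infty).
\]
By the first part, $\1_0L_\alpha\1_0$ and $\1_\infty L_\alpha\1_\infty$ are trace class, while $L_\alpha$ is bounded; therefore both products on the right-hand side are trace class, and so is $\1_0A_\alpha\1_\infty$. Finally $\1_\infty A_\alpha\1_0=(\1_0A_\alpha\1_\infty)^*$ is trace class as well, since $A_\alpha$, $\1_0$ and $\1_\infty$ are self-adjoint.

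The only nontrivial point is finding the logarithmic substitution in the middle step. A direct attempt to factor $\1_0L_\alpha\1_0$ or $\1_\infty L_\alpha\1_\infty$ into a product of two Hilbert-Schmidt operators via Lemma~\ref{lma.b1} fails: unlike the homogeneous kernel of $A_\alpha$, the kernel $t^\alpha s^\alpha e^{-st}$ of $L_\alpha$ does not factor through an intermediate integration, and the natural Hilbert-Schmidt estimates diverge because of the logarithmic divergence of $\int_0^1 t^{-1}\,dt$ (respectively $\int_1^\infty t^{-1}\,dt$). The substitution $t=e^{\pm x}$ is precisely what removes this divergence and reduces the question to the already-proved Lemma~\ref{lma.b2}; checking that the resulting Hankel symbol meets the hypotheses of that lemma (in particular near $u=0$, where it is merely bounded, not vanishing) is then routine.
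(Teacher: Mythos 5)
Your proof is correct and follows essentially the same route as the paper: the same exponential substitutions turning $\1_0L_\alpha\1_0$ and $\1_\infty L_\alpha\1_\infty$ into unweighted Hankel operators with symbols $e^{\pm(\alpha+\frac12)u}e^{-e^{\pm u}}$ handled by Lemma~\ref{lma.b2}, and the same insertion of $\1_0+\1_\infty$ into $A_\alpha=L_\alpha^2$ for the cross terms. The verification that the transformed symbols satisfy the hypotheses of Lemma~\ref{lma.b2} (with the parameter set to $0$) is carried out correctly.
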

\begin{proof}
Let us prove the first statement. 
We will regard $\1_0L_\alpha\1_0$ as acting on $L^2(0,1)$ and $\1_\infty L_\alpha\1_\infty$ as acting on $L^2(1,\infty)$. 
Consider the unitary operators 
\begin{align*}
&U_+:L^2(1,\infty)\to L^2(\bbR_+), \quad
(U_+f)(x)=e^{x/2}f(e^x), \quad x>0,
\\
&U_-:L^2(0,1)\to L^2(\bbR_+), \quad
(U_-f)(x)=e^{-x/2}f(e^{-x}), \quad x>0.
\end{align*}
A straightforward calculation shows that
$$
U_+\1_\infty L_\alpha \1_\infty U_+^*=\Hank(\psi_+)
\quad\text{ and }\quad
U_-\1_0 L_\alpha \1_0 U_+^*=\Hank(\psi_-),
$$
where the kernels $\psi_\pm$ are given by
$$
\psi_+(t)=e^{t(\alpha+1/2)}e^{-e^{t}}, 
\quad
\psi_-(t)=e^{-t(\alpha+1/2)}e^{-e^{-t}},
\quad t>0.
$$
As both functions $\psi_\pm$ are Schwartz class, 
using Lemma~\ref{lma.b2} we find that the unweighted Hankel operators  $\Hank(\psi_\pm)$ 
are trace class. 

To prove the second statement of the lemma, we write $1=\1_0+\1_\infty$ and use the factorisation 
$A_\alpha=L_\alpha^2$ to obtain
$$
\1_0A_\alpha\1_\infty
=
\1_0L_\alpha^2\1_\infty
=
\1_0L_\alpha(\1_0+\1_\infty)L_\alpha\1_\infty
=
(\1_0L_\alpha\1_0) L_\alpha\1_\infty
+
\1_0L_\alpha(\1_\infty L_\alpha\1_\infty).
$$
Now observe that both terms in the right side are trace class by the first part of the lemma.
Thus, $\1_0A_\alpha\1_\infty$ is trace class and by a similar reasoning  $\1_\infty A_\alpha\1_0$
is also trace class. 
\end{proof}

We note that a more careful analysis of the kernels $\psi_\pm$ shows that the operators
$\1_0L_\alpha\1_0$ and $\1_\infty L_\alpha\1_\infty$ belong to the Schatten class $\Sch_p$ 
for any $p>0$. 

\subsection{Kernels $\f_0$ and $\f_\infty$}
Recall the notation $w_\alpha(t)=t^{\alpha}$. 
By a direct calculation of the integral kernels, we have
$$
L_\alpha\1_\infty L_\alpha
=
w_\alpha\Hank(\f_0)w_\alpha,
\quad
L_\alpha\1_0 L_\alpha
=
w_\alpha\Hank(\f_\infty)w_\alpha,
$$
with
$$
\f_0(t)=\frac1{\Gamma(1+2\alpha)}\int_1^\infty x^{2\alpha}e^{-xt}dx,
\quad
\f_\infty(t)=\frac1{\Gamma(1+2\alpha)}\int_0^1 x^{2\alpha}e^{-xt}dx.
$$
Using the integral representation for the Gamma function, we obtain
$$
\f_0(t)+\f_\infty(t)=t^{-1-2\alpha}, \quad t>0.
$$
Further, it is straightforward to see that 
the estimates \eqref{a8a} hold true for all $m\geq0$. 
The following lemma gives a description of the spectra of the two operators \eqref{a11}. 

%%%%%%%%%%%%%%%%%%
\begin{lemma}\label{lma.b4}
%%%%%%%%%%%%%%%%%%
We have
\[
\sigma_\ac(\1_0 L_\alpha\1_\infty L_\alpha \1_0)
=
\sigma_\ac(\1_\infty L_\alpha\1_0 L_\alpha \1_\infty)
=
[0,\pi_\alpha],
\label{b3}
\]
with multiplicity one in both cases.
\end{lemma}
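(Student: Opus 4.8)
The key observation is the factorisation $A_\alpha = L_\alpha^2$ together with the decomposition $\1 = \1_0 + \1_\infty$. Writing $A_\alpha = L_\alpha(\1_0 + \1_\infty)L_\alpha = L_\alpha\1_0 L_\alpha + L_\alpha\1_\infty L_\alpha$ and using the identities from the previous subsection, we have
\[
A_\alpha
=
w_\alpha\Hank(\f_\infty)w_\alpha + w_\alpha\Hank(\f_0)w_\alpha
=
L_\alpha\1_0 L_\alpha + L_\alpha\1_\infty L_\alpha .
\label{b3a}
\]
The idea is to squeeze the two operators in \eqref{b3} between $A_\alpha$ and cut-down versions of it, using Lemma~\ref{lma.b3} to control the trace-class errors, and then invoke the Kato--Rosenblum theorem together with Proposition~\ref{prp.a1}.

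Concretely, first I would conjugate \eqref{b3a} by $\1_0$ (on both sides). This gives
\[
\1_0 A_\alpha \1_0
=
\1_0 L_\alpha\1_0 L_\alpha\1_0 + \1_0 L_\alpha\1_\infty L_\alpha \1_0 .
\]
In the first term on the right, insert $\1 = \1_0 + \1_\infty$ in the middle: $\1_0 L_\alpha\1_0 L_\alpha\1_0 = (\1_0 L_\alpha\1_0)(\1_0 L_\alpha\1_0) + (\1_0 L_\alpha\1_0)(\1_\infty L_\alpha\1_0)$, and by Lemma~\ref{lma.b3} the factor $\1_0 L_\alpha\1_0$ is trace class, so (as $L_\alpha$ is bounded) the whole first term is trace class. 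Hence
\[
\1_0 A_\alpha \1_0 = \1_0 L_\alpha\1_\infty L_\alpha \1_0 + T_0,
\qquad T_0 \in \Sch_1 .
\]
By the Kato--Rosenblum theorem, $\1_0 L_\alpha\1_\infty L_\alpha\1_0$ and $\1_0 A_\alpha\1_0$ (acting on $L^2(0,1)$) have unitarily equivalent a.c.\ parts, so it suffices to compute $\sigma_\ac(\1_0 A_\alpha\1_0)$. But $\1_0 A_\alpha\1_0$ differs from $A_\alpha$ by the trace-class operator $\1_0 A_\alpha\1_\infty + \1_\infty A_\alpha\1_0 + \1_\infty A_\alpha\1_\infty$, except that the last summand lives on the orthogonal subspace $L^2(1,\infty)$. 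More precisely, $A_\alpha = \1_0 A_\alpha\1_0 \oplus 0_{L^2(1,\infty)} + (\text{trace class})$ modulo the obvious identification, so by Kato--Rosenblum $\sigma_\ac(A_\alpha) = \sigma_\ac(\1_0 A_\alpha\1_0) \cup \sigma_\ac(0)$ with multiplicities adding. Since $\sigma_\ac(A_\alpha) = [0,\pi_\alpha]$ with multiplicity two by Proposition~\ref{prp.a1} and the zero operator contributes no a.c.\ spectrum, we conclude $\sigma_\ac(\1_0 A_\alpha\1_0) = [0,\pi_\alpha]$ with multiplicity two --- which is too much. The resolution is that one must instead keep both halves: run the same argument for $\1_\infty L_\alpha\1_0 L_\alpha\1_\infty$ on $L^2(1,\infty)$, obtaining $\1_\infty A_\alpha\1_\infty = \1_\infty L_\alpha\1_0 L_\alpha\1_\infty + T_\infty$ with $T_\infty \in \Sch_1$, and then observe that $A_\alpha$ equals, up to trace class, the orthogonal sum $\bigl(\1_0 L_\alpha\1_\infty L_\alpha\1_0\bigr) \oplus \bigl(\1_\infty L_\alpha\1_0 L_\alpha\1_\infty\bigr)$ on $L^2(0,1)\oplus L^2(1,\infty)$. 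Indeed, expanding $A_\alpha = (\1_0+\1_\infty)L_\alpha(\1_0+\1_\infty)L_\alpha(\1_0+\1_\infty)$ and collecting terms, every summand that is not one of these two ``diagonal'' ones contains a factor $\1_0 L_\alpha\1_0$ or $\1_\infty L_\alpha\1_\infty$ (after also using that the two diagonal summands themselves, as written, need the middle $\1_\infty$ resp.\ $\1_0$, which is automatic), hence is trace class by Lemma~\ref{lma.b3}; the cross terms $\1_0(\cdots)\1_\infty$ and $\1_\infty(\cdots)\1_0$ are likewise trace class by the second statement of Lemma~\ref{lma.b3}. Therefore, by Kato--Rosenblum,
\[
[0,\pi_\alpha]_{(2)} = \sigma_\ac(A_\alpha) = \sigma_\ac\bigl(\1_0 L_\alpha\1_\infty L_\alpha\1_0\bigr) \,\cup\, \sigma_\ac\bigl(\1_\infty L_\alpha\1_0 L_\alpha\1_\infty\bigr),
\]
with multiplicities adding, where the subscript $(2)$ records multiplicity two.

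To finish I would argue that each of the two operators on the right is nonnegative (being of the form $(\1_\infty L_\alpha\1_0)^*(\1_\infty L_\alpha\1_0)$ resp.\ its mirror image, since $L_\alpha$ and the $\1$'s are self-adjoint) and that, by symmetry $\alpha \leftrightarrow$ swap of $0$ and $\infty$ --- concretely, the unitaries $U_\pm$ from the proof of Lemma~\ref{lma.b3} conjugate one into a Hankel-type operator that is manifestly of the same structural form as the other --- the two a.c.\ spectra coincide. Having $\sigma_\ac(X) = \sigma_\ac(Y)$, nonnegativity, and $\sigma_\ac(X)\cup\sigma_\ac(Y) = [0,\pi_\alpha]$ with multiplicities adding to two forces $\sigma_\ac(X) = \sigma_\ac(Y) = [0,\pi_\alpha]$, each with multiplicity one. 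The main obstacle I anticipate is the bookkeeping in the multi-term expansion of $A_\alpha$: one must be careful that after inserting $\1 = \1_0+\1_\infty$ in two places, \emph{every} term other than the two desired diagonal ones genuinely acquires a trace-class factor of the form $\1_0 L_\alpha\1_0$, $\1_\infty L_\alpha\1_\infty$, $\1_0 A_\alpha \1_\infty$, or $\1_\infty A_\alpha\1_0$ --- this is where Lemma~\ref{lma.b3} is used in full, and it is worth writing the expansion out explicitly to be sure no term is overlooked. The symmetry argument identifying the two a.c.\ spectra is the other point requiring a little care, but it follows cleanly from the change of variables already set up in Lemma~\ref{lma.b3}.
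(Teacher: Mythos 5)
Your overall route is the paper's: factorise $A_\alpha=L_\alpha^2$, insert $\1=\1_0+\1_\infty$, discard every term containing a factor $\1_0L_\alpha\1_0$ or $\1_\infty L_\alpha\1_\infty$ (or a cross term $\1_0A_\alpha\1_\infty$) via Lemma~\ref{lma.b3}, and transfer the spectrum $[0,\pi_\alpha]$ of multiplicity two from $A_\alpha$ to the orthogonal sum $\1_0L_\alpha\1_\infty L_\alpha\1_0\oplus\1_\infty L_\alpha\1_0L_\alpha\1_\infty$ by the Kato--Rosenblum theorem. Up to that point the argument is correct, including your self-correction about not throwing away $\1_\infty A_\alpha\1_\infty$.

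The gap is in the final step, where the multiplicity two must be split evenly. First, knowing only that $\sigma_\ac(X)=\sigma_\ac(Y)$ \emph{as sets} and that the multiplicities add to two does not force each to equal one: the local multiplicity functions could be $2\cdot\1_E$ and $2\cdot\1_{E^c}$ with $E$ and its complement both essentially dense in $[0,\pi_\alpha]$. You need unitary equivalence of the a.c.\ parts, not equality of spectra. Second, the mechanism you propose for this --- conjugation by the $U_\pm$ of Lemma~\ref{lma.b3}, equivalently by the inversion $(Uf)(t)=t^{-1}f(1/t)$ --- does not directly identify $X=\1_0L_\alpha\1_\infty L_\alpha\1_0$ with $Y=\1_\infty L_\alpha\1_0L_\alpha\1_\infty$: the inversion fixes $A_\alpha$ (whose kernel is homogeneous of degree $-1$) but does \emph{not} fix $L_\alpha$, so $UXU^*=\1_\infty\widetilde L\1_0\widetilde L\1_\infty$ with $\widetilde L=UL_\alpha U^*\neq L_\alpha$; carrying the computation through with $U_\pm$ instead, one lands on $W^*W$ versus $WW^*$ for a Wiener--Hopf operator $W$, i.e.\ the same difficulty in disguise. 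The paper avoids this by running the symmetry argument one level up, on $\1_0A_\alpha\1_0$ versus $\1_\infty A_\alpha\1_\infty$, where $UA_\alpha U^*=A_\alpha$ and $U\1_\infty=\1_0U$ give a genuine unitary equivalence, and only afterwards perturbing to $X$ and $Y$. Your own remark already contains the cleanest repair: with $T=\1_\infty L_\alpha\1_0$ one has $X=T^*T$ and $Y=TT^*$, and these restricted to the orthogonal complements of their kernels are unitarily equivalent, which identifies the a.c.\ parts of $X$ and $Y$ together with their multiplicity. Make that step explicit and the proof is complete.
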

\begin{proof}
First let us consider the operators $\1_0A_\alpha\1_0$ and $\1_\infty A_\alpha\1_\infty$.
We claim that these two operators are unitarily equivalent to each other. 
Indeed, let 
$$
U:L^2(\bbR_+)\to L^2(\bbR_+), 
\qquad
(Uf)(t)=(1/t)f(1/t), \quad t>0.
$$
Then it is easy to see that $U$ is unitary and $UA_\alpha  U^*=A_\alpha$. 
It follows that 
\[
U\1_\infty A_\alpha \1_\infty U^*=\1_0A_\alpha\1_0.
\label{b12}
\]
Next, write
$$
A_\alpha
=
\1_0 A_\alpha \1_0+
\1_\infty A_\alpha \1_\infty+
(\1_\infty A_\alpha \1_0+
\1_0 A_\alpha \1_\infty).
$$
By Lemma~\ref{lma.b3}, the two cross terms in brackets here are trace class;
thus, we can apply the Kato-Rosenblum theorem. 
Recalling Proposition~\ref{prp.a1}, we obtain that the a.c. spectrum of the sum 
\[
\1_0 A_\alpha \1_0+
\1_\infty A_\alpha \1_\infty
\label{b1}
\]
is $[0,\pi_\alpha]$ with multiplicity two. 
Now observe that the two operators in \eqref{b1} act in orthogonal 
subspaces $L^2(0,1)$ and $L^2(1,\infty)$ and, by \eqref{b12},
they are unitarily equivalent to each other. 
Thus, we obtain
$$
\sigma_\ac(\1_0 A_\alpha \1_0)=\sigma_\ac(\1_\infty A_\alpha \1_\infty)=[0,\pi_\alpha],
$$
with multiplicity one in both cases. 

Finally, write
$$
\1_0A_\alpha\1_0
=
\1_0L_\alpha^2\1_0
=
\1_0L_\alpha\1_\infty L_\alpha\1_0+\1_0 L_\alpha\1_0 L_\alpha\1_0.
$$
By Lemma~\ref{lma.b3}, the second term in the right side here is trace class. 
Thus, by the Kato-Rosenblum theorem, we obtain 
$$
\sigma_\ac(\1_0L_\alpha\1_\infty L_\alpha\1_0)=[0,\pi_\alpha],
$$
with multliplicity one, which gives
the description of the a.c. spectrum of the first operator in \eqref{b3}. 
The second operator is considered in the same way.
\end{proof}

\subsection{Concluding the proof}

First we prove an intermediate statement. We denote $v(t)=w(t)t^{-\alpha}$
and use the notation $\Sch_1$ for the trace class.  
%%%%%%%%%%%%%%%%%%%%%%%%
\begin{lemma}\label{lma.b5}
%%%%%%%%%%%%%%%%%%%%%%%%
Under the hypothesis of Theorem~\ref{thm.a3}, we have
\begin{align}
\sigma_\ac(v\1_0 L_\alpha \1_\infty L_\alpha \1_0 \overline{v})
&=
[0,\pi_\alpha \abs{b_0}^2],
\label{b4}
\\
\sigma_\ac(v\1_\infty L_\alpha \1_0 L_\alpha \1_\infty \overline{v})
&=
[0,\pi_\alpha \abs{b_\infty}^2],
\label{b5}
\end{align}
with multiplicity one in both cases. 
\end{lemma}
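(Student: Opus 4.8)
The plan is to prove \eqref{b4} in detail; the statement \eqref{b5} then follows by an identical argument with the roles of $\1_0$ and $\1_\infty$ exchanged, with $b_0$ replaced by $b_\infty$, and using the second condition in \eqref{a7a} in place of the first. Recall that $v(t)=w(t)t^{-\alpha}$ is bounded on $\bbR_+$, and set $r:=\abs v^2-\abs{b_0}^2=\abs{w}^2t^{-2\alpha}-\abs{b_0}^2$, a real-valued function which by the first condition in \eqref{a7a} satisfies $\int_0^1\abs{r(t)}\,t^{-1}dt<\infty$. The key device is to introduce $K:=\1_\infty L_\alpha\1_0$, viewed as a bounded operator from $L^2(0,1)$ to $L^2(1,\infty)$; since $L_\alpha$ is self-adjoint, $K^*=\1_0 L_\alpha\1_\infty$, and hence $K^*K=\1_0 L_\alpha\1_\infty L_\alpha\1_0$ and $KK^*=\1_\infty L_\alpha\1_0 L_\alpha\1_\infty$ are precisely the two operators whose a.c. spectra are computed in Lemma~\ref{lma.b4}. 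Disregarding the irrelevant zero summand on $L^2(1,\infty)$, the operator in \eqref{b4} equals $vK^*K\overline v=(K\overline v)^*(K\overline v)$ on $L^2(0,1)$. Setting $T:=K\overline v$ and using the elementary fact that, via the polar decomposition of $T$, the a.c. parts of $T^*T$ and of $TT^*=K\overline v\,vK^*=K\abs v^2K^*$ are unitarily equivalent, it suffices to determine $\sigma_\ac(K\abs v^2K^*)$ together with its multiplicity.

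The next step is the trace-class estimate for the difference $K\abs v^2K^*-\abs{b_0}^2KK^*=KrK^*$. Since $r$ is real I would split $r\1_0=\bigl((\sign r)\abs r^{1/2}\1_0\bigr)\bigl(\abs r^{1/2}\1_0\bigr)$, so that $KrK^*$ factors as the product of $\1_\infty L_\alpha(\sign r)\abs r^{1/2}\1_0$ and $\abs r^{1/2}\1_0 L_\alpha\1_\infty$. Both factors are Hilbert--Schmidt by Lemma~\ref{lma.b1}: for $\abs r^{1/2}\1_0 L_\alpha$ the defining integral is $\int_0^1\abs{r}\,t^{-1}dt<\infty$, and for the first factor one passes to the adjoint and uses $\bigl\lvert(\sign r)\abs r^{1/2}\1_0\bigr\rvert^2=\abs r\1_0$; a product of two Hilbert--Schmidt operators is trace class. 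Hence $KrK^*\in\Sch_1$, and the Kato--Rosenblum theorem yields $\sigma_\ac(K\abs v^2K^*)=\sigma_\ac(\abs{b_0}^2KK^*)$ with equal multiplicities.

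Finally, Lemma~\ref{lma.b4} gives $\sigma_\ac(KK^*)=[0,\pi_\alpha]$ with multiplicity one, so $\sigma_\ac(\abs{b_0}^2KK^*)=[0,\pi_\alpha\abs{b_0}^2]$ with multiplicity one when $b_0\neq0$, while $\sigma_\ac(\abs{b_0}^2KK^*)=\sigma_\ac(0)=\emptyset$ when $b_0=0$ — which is the meaning of the right-hand side of \eqref{b4} under the convention explained after Theorem~\ref{thmB} (in that case one may also note directly that \eqref{a7a} makes $v\1_0 L_\alpha$ Hilbert--Schmidt, so the operator in \eqref{b4} is itself trace class). Running this chain of identifications backwards gives \eqref{b4}, and the parallel argument gives \eqref{b5}. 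I expect the only point needing genuine care to be the trace-class bound for $KrK^*$: passing through $TT^*$ is what removes the asymmetry between the outer factors $v$ and $\overline v$, and splitting the sign-indefinite function $r$ into two square-root pieces is exactly what makes Lemma~\ref{lma.b1} applicable — which is also the reason why hypothesis \eqref{a7a} is naturally phrased in terms of $\abs{w}^2$ rather than $w$ itself.
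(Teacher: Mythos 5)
Your proposal is correct and follows essentially the same route as the paper: reduce $v\1_0 L_\alpha\1_\infty L_\alpha\1_0\overline v=T^*T$ to $TT^*=\1_\infty L_\alpha\abs v^2\1_0 L_\alpha\1_\infty$ via the unitary equivalence of the nonzero parts, factor $\abs v^2-\abs{b_0}^2$ into two pieces each making Lemma~\ref{lma.b1} applicable (the paper writes $q_1q_2$ where you write $(\sign r)\abs r^{1/2}\cdot\abs r^{1/2}$), and conclude by Lemma~\ref{lma.b4} and Kato--Rosenblum. The only (harmless) differences are that your $T$ is the adjoint of the paper's and that you spell out the degenerate case $b_0=0$, which the paper leaves to its stated convention.
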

\begin{proof}
We prove the first relation \eqref{b4}; the second relation is proven in a similar way.
First we write 
$$
v\1_0 L_\alpha \1_\infty L_\alpha \1_0 \overline{v}
=TT^*, \quad 
T=
v\1_0 L_\alpha \1_\infty
$$
and recall that for any bounded operator $T$, the operators 
$(TT^*)|_{(\Ker TT^*)^\perp}$ and $(T^*T)|_{(\Ker T^*T)^\perp}$ 
are unitarily equivalent. 
Thus, it suffices to describe the a.c. spectrum of the operator
$$
T^*T
=
\1_\infty L_\alpha\abs{v}^2 \1_0 L_\alpha \1_\infty.
$$

Next, by the hypothesis \eqref{a7a}, we can write
$$
\abs{v(t)}^2=\abs{b_0}^2+q_1(t)q_2(t), \quad \text{ with } \int_0^1 \frac{\abs{q_1(t)}^2+\abs{q_2(t)}^2}{t}dt<\infty.
$$
This yields 
$$
\1_\infty L_\alpha\abs{v}^2 \1_0 L_\alpha \1_\infty
=
\abs{b_0}^2
\1_\infty L_\alpha \1_0 L_\alpha \1_\infty
+
(\1_\infty L_\alpha\1_0 q_1)(q_2 \1_0 L_\alpha \1_\infty).
$$
By Lemma~\ref{lma.b1}, both operators in brackets here are Hilbert-Schmidt. 
It follows that the product of these operators is trace class, i.e. 
$$
\1_\infty L_\alpha\abs{v}^2 \1_0 L_\alpha \1_\infty
=
\abs{b_0}^2
\1_\infty L_\alpha \1_0 L_\alpha \1_\infty+T, \quad T\in\Sch_1.
$$
Lemma~\ref{lma.b4} gives the description of the a.c. spectrum of the first term 
in the right side here. 
Now an application of the Kato-Rosenblum theorem gives 
$$
\sigma_\ac(\1_\infty L_\alpha\abs{v}^2 \1_0 L_\alpha \1_\infty)
=
[0,\pi_\alpha\abs{b_0}^2],
$$
with multiplicity one. 
This yields \eqref{b4}. 
\end{proof}

\begin{proof}[Proof of Theorem~\ref{thm.a3}]
We would like to establish the representation 
\[
w\Hank(a)\overline{w}
=
a_0v(\1_0 L_\alpha\1_\infty L_\alpha\1_0)\overline{v}
+
a_\infty v(\1_\infty L_\alpha\1_0 L_\alpha\1_\infty)\overline{v}+T, 
\quad T\in\Sch_1. 
\label{b6}
\]
Observe that the first two operators in the right side act in orthogonal subspaces
and their a.c. spectra are described by Lemma~\ref{lma.b5}. 
Thus, applying the Kato-Rosenblum theorem, we
will have the required result as soon as the representation \eqref{b6} is proven. 

As a first step, let us write 
$$
a(t)=a_0 \f_0(t)+a_\infty \f_\infty(t)+g(t), \quad t>0,
$$
and examine the error term $g$. We have, using $\f_0(t)+\f_\infty(t)=t^{-1-2\alpha}$, 
\begin{multline*}
t^{1+2\alpha}g(t)
=
(t^{1+2\alpha}a(t)-a_0)+a_0(1-t^{1+2\alpha}\f_0(t))-a_\infty t^{1+2\alpha}\f_\infty(t)
\\
=
(t^{1+2\alpha}a(t)-a_0)+(a_0-a_\infty)t^{1+2\alpha}\f_\infty(t).
\end{multline*}
Thus, by the hypothesis \eqref{a7b} and by the second estimate in \eqref{a8a}, 
we obtain
$$
\frac{d^m}{dt^m}(t^{1+2\alpha}g(t))
=
O(t^{-m+\eps})+O(t^{-m+1+2\alpha})=O(t^{-m+\eps'}), \quad \eps'=\min\{\eps,1+2\alpha\},
$$
as $t\to0$.
Similarly, we have 
$$
t^{1+2\alpha}g(t)=(t^{1+2\alpha}a(t)-a_\infty)+(a_\infty-a_0)t^{1+2\alpha}\f_0(t), 
$$
and so, by the hypothesis \eqref{a7c} and by the first estimate in \eqref{a8a}, 
we get
$$
\frac{d^m}{dt^m}(t^{1+2\alpha}g(t))
=
O(t^{-m-\eps}), \quad t\to\infty.
$$
Thus, $g$ satisfies the hypothesis of Lemma~\ref{lma.b2} and so we obtain 
$$
w_\alpha\Hank(g)w_\alpha\in\Sch_1, \quad\text{ and so}\quad
w\Hank(g)\overline{w}\in\Sch_1.
$$
This gives the intermediate representation
\begin{multline}
w\Hank(a)\overline{w}
=
a_0w\Hank(\f_0)\overline{w}
+
a_\infty w\Hank(\f_\infty)\overline{w}
+T'
\\
=a_0vL_\alpha\1_\infty L_\alpha\overline{v}
+
a_\infty vL_\alpha\1_0 L_\alpha\overline{v}
+T', \quad T'\in\Sch_1. 
\label{b7}
\end{multline}
Consider the first term in the right side of \eqref{b7}.
We can write 
$$
L_\alpha\1_\infty L_\alpha
=
\1_0L_\alpha\1_\infty L_\alpha\1_0
+
(\1_\infty L_\alpha\1_\infty L_\alpha\1_\infty
+
\1_\infty L_\alpha\1_\infty L_\alpha\1_0
+
\1_0 L_\alpha\1_\infty L_\alpha\1_\infty).
$$
By Lemma~\ref{lma.b3}, all terms in brackets here are trace class operators, and so we obtain 
$$
vL_\alpha\1_\infty L_\alpha\overline{v}
-
v\1_0L_\alpha\1_\infty L_\alpha\1_0\overline{v}
\in\Sch_1.
$$
In a similar way, we obtain
$$
vL_\alpha\1_0 L_\alpha\overline{v}
-
v\1_\infty L_\alpha\1_0 L_\alpha\1_\infty \overline{v}
\in\Sch_1.
$$
Substituting this back into \eqref{b7}, we arrive at \eqref{b6}.
\end{proof}


\begin{thebibliography}{3}

\bibitem{AP}
{\sc A.~B.~Aleksandrov, V.~V.~Peller,}
\emph{Distorted Hankel integral operators,}
Indiana Univ. Math. J. \textbf{53}  no. 4 (2004), 925--940.

\bibitem{How2}
{\sc J.~S.~Howland,}
\emph{Spectral theory of operators of Hankel type, II,}
Indiana University Mathematics Journal, 
\textbf{41} no. 2 (1992), 427--434.

\bibitem{JP}
{\sc S.~Janson, J.~Peetre,}
\emph{Paracommutators --- boundedness and Schatten-von Neumann properties,} 
Trans. Amer. Math. Soc. \textbf{305} (1988), 467--504.


\bibitem{KS}
{\sc T.~Kalvoda, P.~Stovicek,}
\emph{A family of explicitly diagonalizable weighted Hankel matrices generalizing the Hilbert matrix,}
Linear Multilinear Algebra \textbf{64}  no. 5 (2016), 870--884.


\bibitem{Pow}
{\sc S.~Power,}
\emph{Hankel operators on Hilbert space,}
Research Notes in Math., \textbf{64}, Pitman, Boston, 1982. 


\bibitem{PYa}
{\sc A.~Pushnitski, D.~Yafaev,}
\emph{Spectral and scattering theory of self-adjoint Hankel operators with piecewise continuous symbols,}
J. Operator Theory \textbf{74} no.~2 (2015), 417--455. 

\bibitem{PYa2}
{\sc A.~Pushnitski, D.~Yafaev,}
\emph{Sharp estimates for singular values of Hankel operators, }
Integr. Equ. Oper. Theory, \textbf{83} no.~3 (2015), 393--411.  

\bibitem{RS3}
{\sc M.~Reed, B.~Simon,}
\emph{Methods of Modern Mathematical Physics. III: Scattering Theory.}
Academic Press, New York 1979. 


\bibitem{Rochberg}
{\sc R.~Rochberg,}
\emph{Trace ideal criteria for Hankel operators and commutators,}
Indiana University Mathematics Journal, 
\textbf{31} no.~6 (1982), 913--925. 

\end{thebibliography}
\end{document}